\newtheorem{theorem}{Theorem}[section]    
\newtheorem{claim}[theorem]{Claim}
\newtheorem{corollary}[theorem]{Corollary} 
\theoremstyle{definition}
\newtheorem{observation}{Observation} 
\newtheorem{example}[theorem]{Example}    
\numberwithin{equation}{section}
\newcommand{\Q}{\mathbb{Q}}
\newcommand{\e}{\varepsilon}
\title[On the $3$-dimensional invariant for cyclic contact branched coverings]{On the $3$-dimensional invariant for cyclic contact branched coverings}
\author{Tetsuya Ito}
\thanks{The author was partially supported by JSPS KAKENHI, Grant Number 15K17540.}
\address{Research Institute for Mathematical Sciences, Kyoto university, Kyoto, 606-8502, Japan}
\email{tetitoh@kurims.kyoto-u.ac.jp}
\urladdr{http://www.kurims.kyoto-u.ac.jp/~tetitoh/}
\date{\today} 
\begin{document}
\begin{abstract}
We give a formula of $3$-dimensional invariant for a cyclic contact branched covering of the standard contact $S^{3}$. 
\end{abstract}

\maketitle

\section{Introduction}

Let $\widetilde{M} \rightarrow M$ be a branched covering of a 3-manifold $M$, branched along a link $K \subset M$. When $M$ has a contact structure $\xi$ and $K$ is a transverse link in the contact 3-manifold $(M,\xi)$, $\widetilde{M}$ has the natural contact structure $\widetilde{\xi}$. We call the contact 3-manifold  $(\widetilde{M},\widetilde{\xi})$ the \emph{contact branched covering} of $(M,\xi)$, branched along the transverse link $K$.

Let $(M,\xi)$ be a $p$-fold cyclic contact branched covering of $(S^{3},\xi_{std})$ (the standard contact $S^{3}$), branched along a transverse link $K$. In \cite[Theorem 1.4]{hkp}, it is shown that the euler class $e(\xi)$ is zero, and the 3-dimensional invariant $d_{3}(\xi) \in \Q$ (See \cite{go} for definition) only depends on a topological link type of $K$ and its self-linking number. 

In this note, we show a direct formula of $d_{3}(\xi)$ in terms of its branch locus $K$.

\begin{theorem}
\label{theorem:main}
If a contact 3-manifold $(M,\xi)$ is a $p$-fold cyclic contact branched covering of  $(S^{3},\xi_{std})$, branched along a transverse link $K$, then
\[ d_{3}(\xi) = -\frac{3}{4}\sum_{\omega: \omega^{p}=1}\sigma_{\omega}(K)- \frac{p-1}{2}sl(K)-\frac{1}{2}{p}.\]
Here $\sigma_{\omega}(K)$ denotes the Tristram-Levine signature, the signature of $(1-\omega)A + (1-\overline{\omega})A^{T}$, where $A$ denotes the Seifert matrix for $K$, and $sl(K)$ denotes the self-linking number.
\end{theorem}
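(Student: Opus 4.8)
The plan is to realize $(M,\xi)$ as the boundary of an almost complex $4$-manifold and to apply Gompf's formula for the $3$-dimensional invariant. Present $K$ as the closure of a braid and let $F$ be the associated Bennequin (Seifert) surface; pushing $F$ into the interior of $B^4$ produces a properly embedded surface $\widehat F$ with $\partial \widehat F = K \subset S^3 = \partial B^4$. Let $\pi \colon X \to B^4$ be the $p$-fold cyclic branched cover along $\widehat F$, so that $\partial X = M$. Modeling $J$ on the pullback of the standard complex structure on $B^4$, one obtains an almost complex structure on $X$ whose induced contact structure on $\partial X$ is the contact branched covering $\xi$; checking this compatibility on the nose (so that Gompf's formula applies to $(X,J)$) is the first point I would nail down. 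Gompf's formula then reads
\[ d_3(\xi) = \frac{1}{4}\left( c_1(X,J)^2 - 2\chi(X) - 3\sigma(X) \right), \]
normalized so that $(B^4,J_{std})$ recovers $d_3(\xi_{std}) = -\frac{1}{2}$.

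I would first dispose of the two topological terms. Euler characteristic is multiplicative under branched covers up to the standard correction along the branch locus, giving
\[ \chi(X) = p\,\chi(B^4) - (p-1)\chi(\widehat F) = p - (p-1)\chi(F). \]
For the signature I would invoke the classical computation of signatures of cyclic branched covers of $B^4$ along a pushed-in Seifert surface (the $G$-signature theorem, in the form of Viro and Kauffman--Taylor), which expresses $\sigma(X)$ as the sum of equivariant signatures over the characters of $\Z/p$:
\[ \sigma(X) = \sum_{\omega \colon \omega^p = 1} \sigma_\omega(K), \]
where the $\omega = 1$ summand vanishes because $(1-1)A + (1-1)A^T = 0$. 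This single term accounts for the $-\frac{3}{4}\sum_\omega \sigma_\omega(K)$ in the statement.

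The remaining, and most delicate, term is $c_1(X,J)^2$. Since $c_1(B^4, J_{std}) = 0$, the canonical class formula for a $p$-fold cyclic branched cover identifies the relative first Chern class as $c_1(X,J) = -(p-1)\,\mathrm{PD}[\widetilde F]$, where $\widetilde F = \pi^{-1}(\widehat F)$ is the ramification surface; hence $c_1(X,J)^2 = (p-1)^2[\widetilde F]^2$. Here $[\widetilde F]^2$ is the relative self-intersection taken with the boundary framing dictated by $J$, namely the contact self-linking framing of the lifted branch locus $\widetilde K = \partial\widetilde F$. I would evaluate it by combining the normal bundle relation $N(\widetilde F)^{\otimes p} \cong \pi^*N(\widehat F)$, which forces the normal Euler number to vanish relative to the Seifert framing lifted through $\pi$, with the framing correction that measures the contact framing of $\widetilde K$ against that lifted framing; the relative adjunction (slice--Bennequin) identity, which downstairs packages $sl(K)$, $\chi(F)$ and the self-intersection of $\widehat F$ together, serves as the organizing bookkeeping. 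The outcome should be
\[ c_1(X,J)^2 = -2(p-1)\bigl( sl(K) + \chi(F) \bigr). \]

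Substituting into Gompf's formula, the $\chi(F)$ produced in $c_1(X,J)^2$ cancels the one in $-2\chi(X)$, the remaining constant $-2p$ survives from $-2\chi(X)$, and one is left with exactly $d_3(\xi) = -\frac{3}{4}\sum_\omega \sigma_\omega(K) - \frac{p-1}{2}sl(K) - \frac{1}{2}p$. The main obstacle I anticipate is precisely the $c_1(X,J)^2$ computation: getting the relative self-intersection of the branch locus right with the contact framing, and correctly relating the self-linking of the lifted branch locus $\widetilde K$ to $sl(K)$ and $\chi(F)$, including all signs. A secondary technical point is the clean identification of the contact structure induced on $\partial X$ by $J$ with $\xi$, which is what licenses Gompf's formula in the first place.
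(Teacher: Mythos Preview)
Your strategy and the paper's share the same endpoint --- both pass through the $p$-fold cyclic branched cover $W$ of $B^{4}$ along a pushed-in Bennequin surface and invoke $\sigma(W)=\sum_{\omega^{p}=1}\sigma_{\omega}(K)$ --- but the routes differ substantially. The paper does \emph{not} equip $W$ with an almost complex structure. Instead it first realizes $(M,\xi)$ by contact $(\pm 1)$-surgery on a Legendrian link in $(S^{3},\xi_{std})$, read off from an open book whose page is the $p$-fold branched cover of the disk over $m$ points; every surgery curve is a Legendrian unknot with $tb=-1$ and $rot=0$, so in the Ding--Geiges--Stipsicz formula $d_{3}(\xi)=\tfrac{1}{4}\bigl(c^{2}-3\sigma(X)-2\chi(X)\bigr)+q$ the term $c^{2}$ vanishes and the $sl(K)$ contribution falls out of elementary bookkeeping with $\chi(X)$ and the count $q$ of $(+1)$-surgeries. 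The only remaining task is to compute $\sigma(X)$, and here the paper's key step is purely diagrammatic: it proves that the surgery $4$-manifold $X$ is diffeomorphic to $W$ by writing down both Kirby diagrams explicitly and checking they coincide. In short, the paper trades your two flagged difficulties (extending $J$ over $W$ so that it induces $\xi$, and evaluating $c_{1}(W,J)^{2}$) for a concrete Kirby-calculus identification.

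The obstacles you anticipate are genuine, not merely bookkeeping. The $+q$ correction in the Ding--Geiges--Stipsicz formula is there precisely because the natural almost complex structure on a contact-surgery $4$-manifold fails to induce $\xi$ on the boundary whenever $(+1)$-surgeries are present; since the paper shows $X\cong W$, the same caution applies to any ``pullback'' $J$ on $W$. Your proposed identity $c_{1}(W,J)=-(p-1)\,\mathrm{PD}[\widetilde F]$ also presupposes that the branch surface is $J$-complex, which one cannot arrange for a generic Seifert surface while simultaneously matching $\xi$ on the boundary; indeed, combining it with the target value $c_{1}^{2}=-2(p-1)\bigl(sl(K)+\chi(F)\bigr)$ would force $[\widetilde F]^{2}=-\tfrac{2}{p-1}\bigl(sl(K)+\chi(F)\bigr)$, which need not be integral. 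The paper's surgery approach, with all rotation numbers zero, sidesteps these issues entirely.
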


Thus, our formula tells us that $d_{3}(\xi)$ actually only depends on the concordance class of $K$ and the self-linking number. Also, by slice Bennequin inequality \cite{ru}, it also shows that the smooth 4-genus $g_{4}(K)$ of $K$ gives a lower bound of $d_{3}(\xi)$.

\begin{corollary}
If a contact 3-manifold $(M,\xi)$ is a $p$-fold cyclic contact branched covering of $(S^{3},\xi_{std})$ branched along $K$, then $d_{3}(\xi) \geq -\frac{5}{2}(p-1)g_{4}(K)-\frac{1}{2}$.
\end{corollary}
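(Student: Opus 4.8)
The plan is to read off the corollary directly from Theorem~\ref{theorem:main} by bounding each of the three terms on the right-hand side separately. I would start from the formula
\[ d_{3}(\xi) = -\frac{3}{4}\sum_{\omega:\,\omega^{p}=1}\sigma_{\omega}(K)-\frac{p-1}{2}sl(K)-\frac{1}{2}p \]
and then invoke the two standard slice-genus inequalities: the Murasugi--Tristram inequality for the Tristram--Levine signatures and Rudolph's slice Bennequin inequality \cite{ru} for the self-linking number. The whole point is that both the signature term and the self-linking term each contribute a multiple of $g_{4}(K)$, and that these two contributions are designed to add up to exactly $-\frac{5}{2}(p-1)g_{4}(K)$ while the constants collapse to $-\frac{1}{2}$.

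First I would handle the signature sum. Among the $p$-th roots of unity there is $\omega=1$, for which $(1-\omega)A+(1-\overline{\omega})A^{T}=0$, so $\sigma_{1}(K)=0$ and the sum has at most $p-1$ nonzero terms. For each remaining root of unity the Murasugi--Tristram inequality gives $\sigma_{\omega}(K)\leq 2g_{4}(K)$, hence
\[ \sum_{\omega:\,\omega^{p}=1}\sigma_{\omega}(K)\leq 2(p-1)g_{4}(K), \]
and since the coefficient $-\frac{3}{4}$ is negative this yields the lower bound $-\frac{3}{4}\sum_{\omega}\sigma_{\omega}(K)\geq -\frac{3}{2}(p-1)g_{4}(K)$.

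Next I would bound the self-linking term. The slice Bennequin inequality states $sl(K)\leq 2g_{4}(K)-1$, so
\[ -\frac{p-1}{2}sl(K)\geq -\frac{p-1}{2}\bigl(2g_{4}(K)-1\bigr)=-(p-1)g_{4}(K)+\frac{p-1}{2}. \]
Adding the two estimates together with the constant $-\frac{1}{2}p$ then gives
\[ d_{3}(\xi)\geq -\frac{3}{2}(p-1)g_{4}(K)-(p-1)g_{4}(K)+\frac{p-1}{2}-\frac{1}{2}p=-\frac{5}{2}(p-1)g_{4}(K)-\frac{1}{2}, \]
which is precisely the asserted inequality.

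The only genuine subtlety, and the step I would scrutinize most carefully, is the exact form of the signature bound for the branch \emph{link} $K$: the Murasugi--Tristram inequality for links in general carries a nullity correction and requires $\omega$ to avoid the zeros of the Alexander polynomial, so I would need to confirm that the clean estimate $\sigma_{\omega}(K)\leq 2g_{4}(K)$ is valid in the generality used here, or else verify that the correction terms only improve the bound and hence can be discarded. Beyond this, the argument is a direct substitution followed by the arithmetic simplification of the constant terms.
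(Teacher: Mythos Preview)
Your proposal is correct and is exactly the argument the paper has in mind: the paper does not give a written proof of the corollary at all, merely the sentence ``by slice Bennequin inequality \cite{ru}, it also shows that the smooth $4$-genus $g_{4}(K)$ of $K$ gives a lower bound of $d_{3}(\xi)$,'' and you have supplied the obvious missing ingredient (the Murasugi--Tristram bound $|\sigma_{\omega}(K)|\leq 2g_{4}(K)$ for the $p-1$ nontrivial roots of unity) and carried out the arithmetic that collapses the constants to $-\tfrac{1}{2}$. Your self-flagged caveat about the nullity correction for links is the only point not addressed by the paper either, so you are not missing anything the paper provides.
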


\section{Proof}

\begin{proof}[Proof of Theorem \ref{theorem:main}]

Let $(M,\xi)$ be a $p$-fold cyclic contact branched covering, branched along a transverse link $K$ in $(S,\xi_{std})$.
We put the transverse link $K$ as a closed braid, the closure of an $m$-braid $\alpha$.

Let $(S,\psi)$ be the open book decomposition of $(S^{3},\xi_{std})$, whose binding is the $(p,m)$-torus link. Inside $S^{3}$, the page $S$ is an obvious Seifert surface of the $(p,m)$-torus link which we view as the closure of the $p$-braid $(\sigma_{1}\cdots \sigma_{m-1})^{p}$ as we illustrate in Figure \ref{fig:curves}.

Topologically, the page $S$ is the $p$-fold cyclic branched covering of the disk $D^{2}$, branched along $m$-points. Let $\pi:B_{m}=MCG(D^{2}\setminus\{m \text{ points}\}) \rightarrow MCG(S)$ be the map induced by the branched covering map, which is explicitly is written by
$\pi(\sigma_{i}) = D_{i,1}\cdots D_{i,p-1}$ \cite[Lemma 3.1]{hkp}.
Here $D_{i,j}$ denotes the right-handed Dehn twist along the curve $C_{i,j}$ on $S$, given in Figure \ref{fig:curves}. 
(Here we are assuming that $MCG(S)$ acts on $S$ from left, so $D_{i,1}\cdots D_{i,p-1}$ means $D_{i,p-1}$ comes first and $D_{i,1}$ last.)

An important observation is that in $(S^{3},\xi_{std})$, the curves $C_{i,j}$ are realized as the Lergendrian unknot with $tb=-1, rot=0$.

By using $D_{i,j}$, $\psi$ is written by
\[ \psi=\pi(\sigma_{m-1} \cdots\sigma_{2}\sigma_{1})=(D_{m-1,1}\cdots D_{m-1,p-1}) \cdots(D_{2,1}\cdots D_{2,p-1})  (D_{1,1}\cdots D_{1,p-1}). \]
Also, $(S,\phi=\pi(\alpha))$ gives the open book decomposition of $(M,\xi)$.

\begin{figure}[htb]
\begin{center}
\includegraphics*[bb=84 636 357 730,width=80mm]{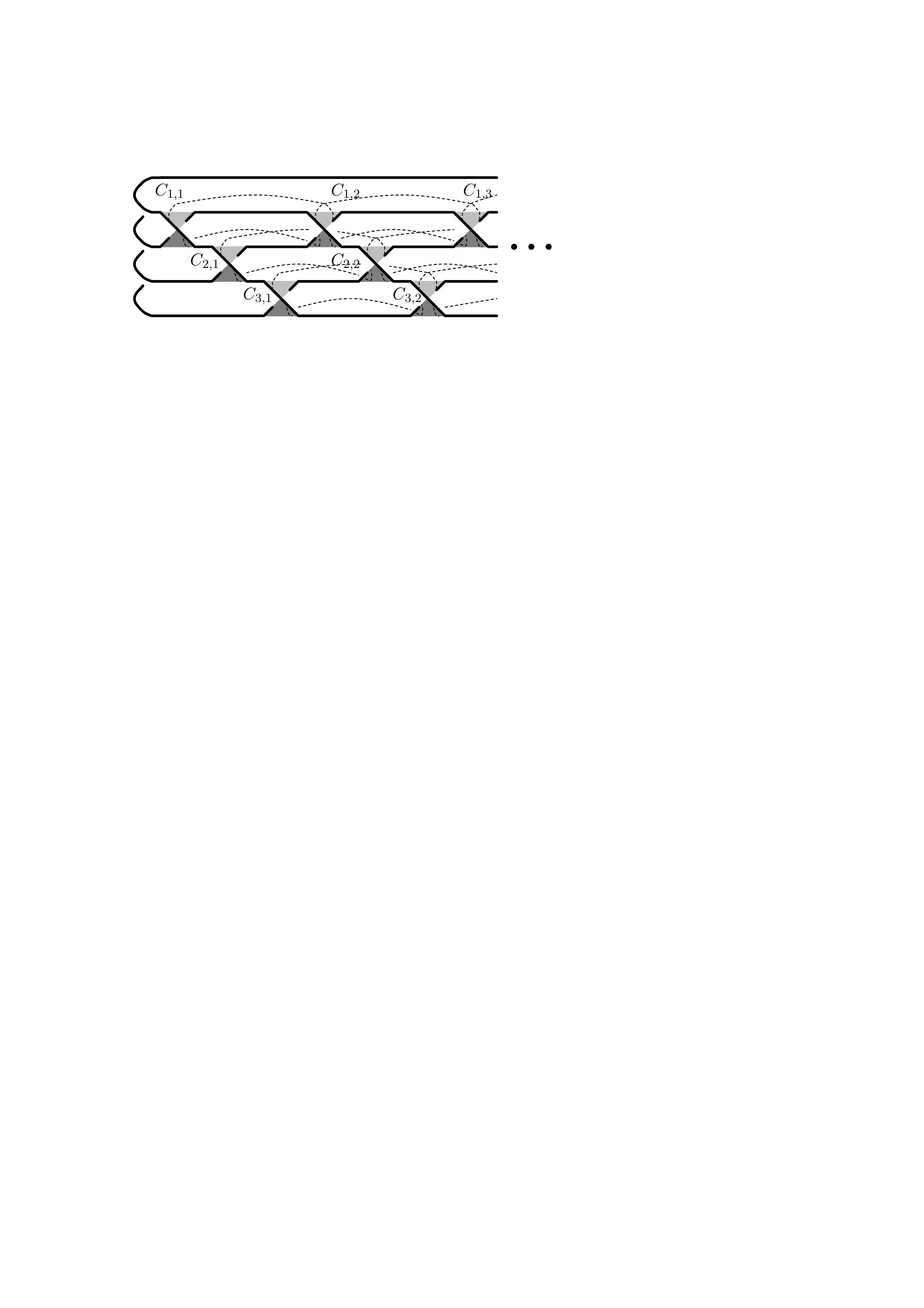}
\caption{Page $S$ of the open book $(S, \psi)$ inside $S^{3}$.}
\label{fig:curves}
\end{center}
\end{figure}

First we draw the surgery diagram of $(M,\xi)$ from its open book decomposition $(S,\phi)$, following the discussion in \cite[Section 3]{hkp}. 
We take a factorization of the braid $(\sigma_{1}^{-1}\cdots \sigma_{m-1}^{-1})\alpha$
\begin{equation}
\label{eqn:factbraid}
(\sigma_{1}^{-1}\cdots \sigma_{m-1}^{-1})\alpha = \sigma_{i_1}^{\e_1} \cdots \sigma_{i_n}^{\e_n} \qquad (\e_j \in \{\pm 1\}, \  i_j \in \{1,\ldots,m-1\})
\end{equation}
 by the standard generators $\{\sigma_1^{\pm 1},\ldots, \sigma_{m-1}^{\pm 1}\}$ of $B_{m}$. By replacing each $\sigma_{i}^{\pm 1}$ in (\ref{eqn:factbraid}) with the sequence of Dehn twists $(D_{i,1}\cdots D_{i,p-1})^{\pm 1}$, we have the factorization of $\psi^{-1}\phi$ by Dehn twists $D_{i,j}^{\pm 1}$, 
\begin{equation}
\label{eqn:factphi}
\psi^{-1}\phi =\prod_{j=1}^{n}(D_{i_j,1}\cdots D_{i_j,p-1})^{\e_j}.
\end{equation}

For each Dehn twist $D_{i,j}^{\pm 1}$ in the factorization (\ref{eqn:factphi}) we put a curve $C_{i,j}$ on distinct pages on the open book $(S,\psi)$, so that it is a Legendrian unknot with $tb=-1$, $rot=0$ in $(S^{3},\xi_{std})$.
Then $(M,\xi)$ is obtained by the contact surgery along the resulting Legendrian link. Here the surgery coefficient of a component is $(-1)$ (resp. $(+1)$) if it comes from a positive (resp. negative) Dehn twist.

The factor $\sigma_{i}$ in the factorization (\ref{eqn:factbraid}) gives a sequence of Dehn twists $(D_{i,1}\cdots D_{i,p-1})$ in the factorization (\ref{eqn:factphi}). The Legendrian curves $C_{i,1},\ldots C_{i,p-1}$, put in different pages (so that $C_{i,p-1}$ comes first and $C_{i,1}$ last), produce the $(p-1)$ component Legendrian link as we draw in Figure \ref{fig:csd_local} (a). 
Similarly, $\sigma_{i}^{-1}$ in the factorization (\ref{eqn:factbraid}) gives a sequence of Dehn twists $D_{i,p-1}^{-1}\cdots D_{i,1}^{-1}$ in the factorization (\ref{eqn:factphi}), which produce the $(p-1)$ component Legendrian unlink as we draw in Figure \ref{fig:csd_local} (b).

\begin{figure}[htb]
\begin{center}
\includegraphics*[bb=76 663 392 741,width=100mm]{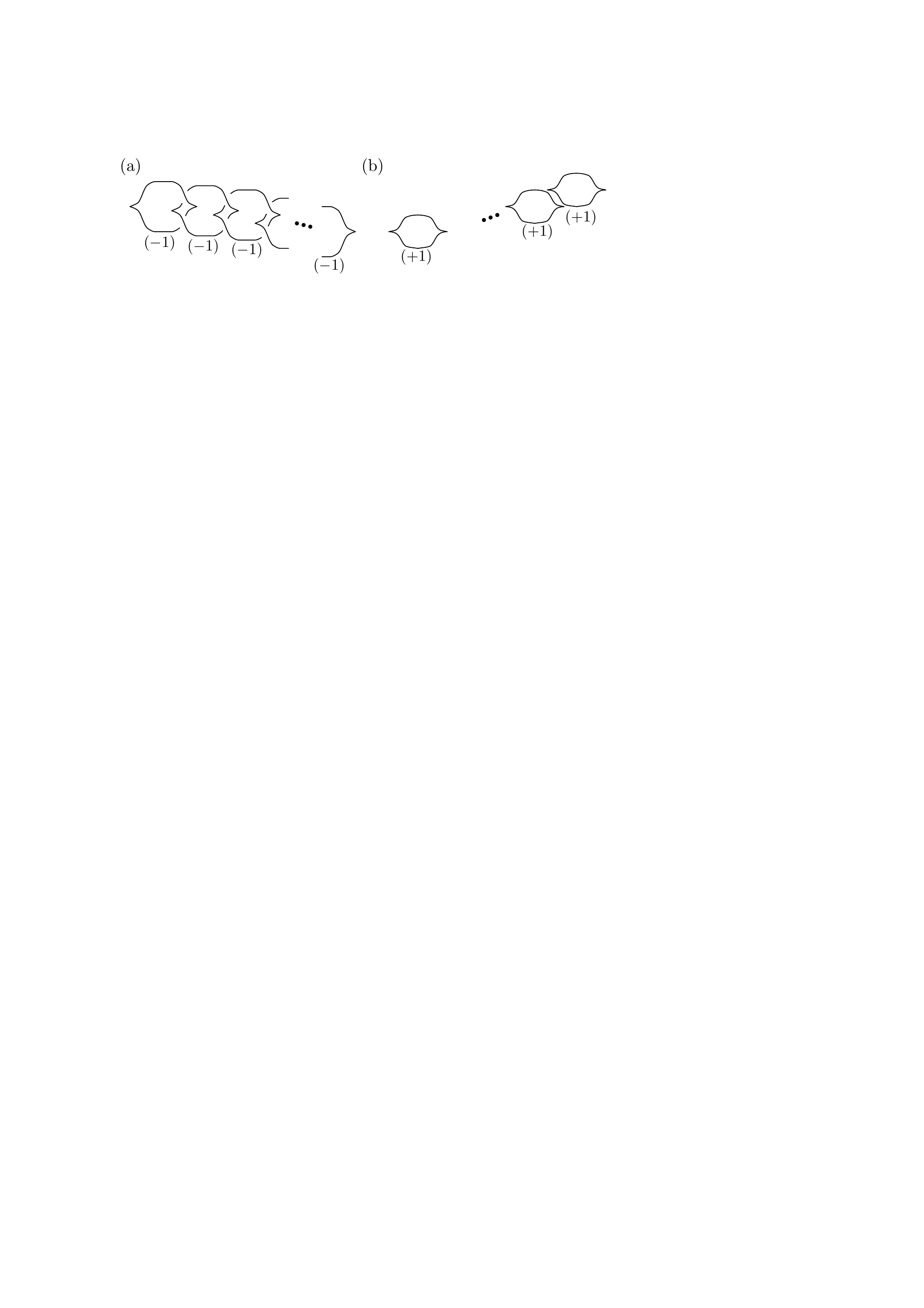}
\caption{The contribution of $\sigma_{i}^{\pm 1}$ in the resulting contact surgery diagram}
\label{fig:csd_local}
\end{center}
\end{figure}

These local contributions of surgery links interact each other, whose linking patterns can be chased by looking the page $S$ in Figure \ref{fig:curves}, as we summarize as follows (cf. \cite[Fig 11, Remark 3.3]{hkp}):

\begin{observation}
\label{obs:cslink}
 Let $\mathcal{L}_{i_k}^{\e_k}= C_{i_k,1}\cup \cdots \cup C_{i_k,p-1}$ and $\mathcal{L}_{i_l}^{\e_l} = C_{i_l,1}\cup \cdots \cup C_{i_l,p-1}$ be the sub Legendrian links in the contact surgery diagram of $M$, that comes from the $k$-th factor $\sigma_{i_k}^{\e_k}$ and $l$-th factor $\sigma_{i_l}^{\e_l}$ in the factorization (\ref{eqn:factbraid}), with $k < l$.

Then the components $C_{i_{k},s}$ and $C_{i_l,t}$ link forms a (topological) positive Hopf link, if and only if $i_{k}\in \{i_{l},i_{l}+1\}$. 
Otherwise, two components $C_{i_{k},s}$ and $C_{i_l,t}$ are disjoint. 
(See Figure \ref{fig:csd_localnest}.)
\end{observation}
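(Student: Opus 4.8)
The plan is to reduce the statement to a computation of the pairwise linking numbers $\mathrm{lk}(C_{i_k,s},C_{i_l,t})$ of the surgery components. Each $C_{i,j}$ is an unknot, so two of them that have linking number $+1$ and are otherwise split and unknotted automatically form a positive Hopf link; it therefore suffices to show that this linking number is $+1$ exactly when $i_k\in\{i_l,i_l+1\}$ and $0$ otherwise, with the sign arranged so that the Hopf link is positive. Since all the curves sit on pages of the open book $(S,\psi)$, I would compute these linking numbers directly on the page, using the principle (as in \cite[Remark 3.3]{hkp}) that the linking number of two curves placed on distinct pages equals the signed count of their intersection points on a common page, where the over/under behaviour at each intersection point, and hence its sign, is dictated by the page order, the component coming from the earlier factor ($k<l$) passing underneath.

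With this reduction in hand, the main geometric input is the intersection pattern of the $C_{i,j}$ read off from the branched-cover description of the page. Recall that $S$ is the $p$-fold cyclic cover of $D^{2}$ branched over the $m$ marked points and that $C_{i,j}$ is the lift of the arc $a_i$ joining the $i$-th and $(i+1)$-st marked points which lies between the consecutive sheets $j$ and $j+1$ (Figure \ref{fig:curves}). I would then split into cases according to $|i_k-i_l|$. When $|i_k-i_l|\ge 2$ the arcs $a_{i_k}$ and $a_{i_l}$ are disjoint in $D^{2}$, hence their lifts are disjoint in $S$ and the two components are split, giving the ``otherwise disjoint'' conclusion. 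When $i_k=i_l$ the two components lie over the same arc but on different pages, and when $|i_k-i_l|=1$ the arcs $a_{i_k}$ and $a_{i_l}$ share exactly one marked point, so their lifts can meet only in a neighbourhood of that branch point; in each case I would verify that the relevant sheet pairs meet in exactly one transverse point, producing a single crossing and hence a Hopf link.

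The hard part will be the sign bookkeeping that produces the \emph{asymmetric} condition $i_k\in\{i_l,i_l+1\}$ rather than the symmetric $|i_k-i_l|\le 1$, together with the positivity of each resulting Hopf link. The subtlety is that the abstract intersection of $a_i$ with its two neighbours $a_{i-1}$ and $a_{i+1}$ is symmetric, whereas the linking number in $S^{3}$ depends on which strand lies on the lower page: the two orderings $(i_k,i_l)=(i,i-1)$ and $(i_k,i_l)=(i-1,i)$ give genuinely different two-component links and may have different linking numbers. I expect to settle this by a local model near the shared branch point in the cover, showing that the lower-page curve threads the spanning disk of the upper-page curve precisely when the higher-indexed arc comes from the earlier factor, that is when $i_k=i_l+1$ (the case $i_k=i_l$ being treated the same way), and that in every such case the orientations together with the right-handedness of the page framing yield linking $+1$. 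Once the signs are fixed, recording how the $(p-1)$ strands of $\mathcal{L}_{i_k}^{\e_k}$ nest against those of $\mathcal{L}_{i_l}^{\e_l}$ reproduces the linking pattern of Figure \ref{fig:csd_localnest}; everything apart from this local sign computation is routine from the branched-cover picture.
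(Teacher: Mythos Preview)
Your reduction to pairwise linking numbers computed as algebraic intersection numbers on the page is exactly what the paper intends: the Observation carries no formal argument there, only the instruction to chase the linking pattern on the page $S$ of Figure~\ref{fig:curves}, together with a pointer to \cite[Fig.~11, Remark~3.3]{hkp}. So the first two thirds of your plan---reduce to linking, compute it as signed intersection on $S$, and split on $|i_k-i_l|$---coincides with the paper.

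The gap is in your final paragraph. You correctly notice that the stated condition $i_k\in\{i_l,i_l+1\}$ (with $k<l$) is asymmetric while the intersection of the arcs $a_{i_k}$ and $a_{i_l}$ in the disc, and hence of their lifts in $S$, is symmetric under $i_k\leftrightarrow i_l$. But the mechanism you propose to break the symmetry cannot work. The linking number is symmetric, $\mathrm{lk}(C_{i_k,s},C_{i_l,t})=\mathrm{lk}(C_{i_l,t},C_{i_k,s})$, and exchanging which page lies above the other only swaps over and under at each crossing of a diagram; it never changes the link type. Two curves on $S$ meeting transversely in one point form the \emph{same} Hopf link whichever page order you choose. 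So the sentence ``the two orderings $(i_k,i_l)=(i,i-1)$ and $(i_k,i_l)=(i-1,i)$ give genuinely different two-component links and may have different linking numbers'' is false, and no local model near the branch point can produce linking in one ordering and unlinking in the other.

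What the asymmetric condition in the paper is really recording is not an asymmetry of linking numbers but of the \emph{diagram}: with $k<l$ the block $\mathcal L_{i_k}^{\e_k}$ is drawn in front and $\mathcal L_{i_l}^{\e_l}$ behind in the projection, so one obtains the nested picture of Figure~\ref{fig:csd_localnest} precisely when $i_k\in\{i_l,i_l+1\}$, and its mirror when $i_k=i_l-1$. This is exactly parallel to Observation~\ref{obs:handlenest}, where the same condition governs which $1$-handle of $\Sigma$ nests inside which. For the use made of the Observation in Claim~\ref{claim:key}---matching the framed-link diagrams of $X$ and $W$---it is this diagrammatic nesting and the resulting linking matrix that must agree, and your page-intersection computation already supplies that. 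So do not try to manufacture a genuine asymmetry in the linking numbers; instead, record that the page order fixes the over/under data and hence the nesting picture, and verify that this matches the Seifert-surface side.
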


\begin{figure}[htb]
\begin{center}
\includegraphics*[bb= 77 609 332 730,width=80mm]{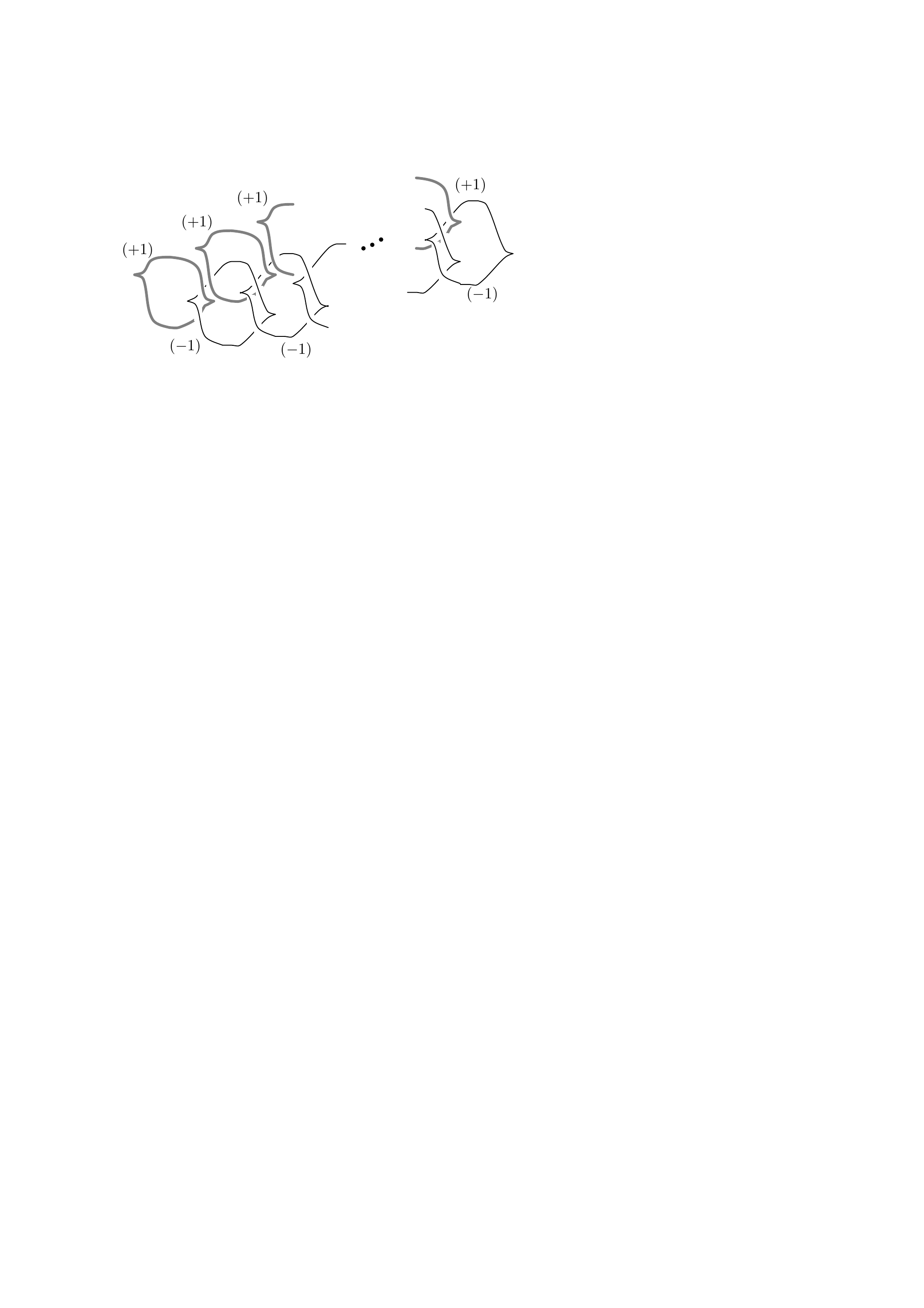}
\caption{How the local contribution of $\sigma_{i}^{\pm 1}$ in the contact surgery diagram links each other. Here we illustrate contributions for $\sigma_{i_k}$ (depicted by black line) and $\sigma_{i_l}^{-1}$ (depicted by gray line) with $k<l$ in the factorization (\ref{eqn:factbraid}), for the case $i_{k} \in \{i_l,i_l+1\}$.}
\label{fig:csd_localnest}
\end{center}
\end{figure}

The contact surgery diagram provides a 4-manifold $X$ that bounds $M$. By \cite[Corollary 3.6]{dgs}, 
\[ d_{3}(\xi) = \frac{1}{4}\left(-3\sigma(X) - 2\chi(X) \right) + q, \]
where $q$ is the number of $(+1)$-contact surgeries, and $\chi(X)$ is the euler characteristic of $X$. Note that the term $c^{2}$ in the formula \cite[Corollary 3.6]{dgs} disappears since each component of the surgery link has zero rotation number. Let $e_{+}$ and $e_{-}$ be the number of positive and negative Dehn twist in the factorization (\ref{eqn:factbraid}). Since each factor $\sigma_{i}^{\pm 1}$ produces $(p-1)$ $(\mp 1)$ contact surgeries along unknots, 
\begin{eqnarray*}
d_{3}(\xi)& = &-\frac{3}{4}\sigma(X) -\frac{1}{2}\bigl( (p-1)e_{+}+(p-1)e_{-}+1 \bigl) + (p-1)e_{-} \\
& = &  -\frac{3}{4}\sigma(X)  -\frac{p-1}{2}(e_{+}-e_{-}) -\frac{1}{2}.
\end{eqnarray*}
By Bennequin's formula $sl(K) =e_{+} - e_{-}+1$, hence
\begin{equation}
\label{eqn:conseq1} d_{3}(\xi) = -\frac{3}{4} \sigma(X) -\frac{p-1}{2}sl(K) -\frac{p}{2}. 
\end{equation}

It remains to compute $\sigma(X)$. 
Take a factorization of the braid $\alpha$ given by
\begin{equation}
\label{eqn:factbraid2}
\alpha = \sigma_{m-1}\cdots \sigma_{1}\sigma_{i_1}^{\e_1} \cdots \sigma_{i_n}^{\e_n} \qquad (\e_j \in \{\pm 1\}, \  i_j \in \{1,\ldots,m-1\}).
\end{equation}

Let $\Sigma \subset S^{3}=\partial B^{4}$ be the canonical Seifert surface of $K$ that comes from the factorization (\ref{eqn:factbraid2}). Namely, $\Sigma$ is made of $m$ disks $\{D_1,\ldots,D_m\}$, with twisted bands connecting $i$-th and $(i+1)$-st disk for each $\sigma_{i}^{\pm 1}$ in the factorization (\ref{eqn:factbraid}).

The following is the most crucial observation in our computation.

\begin{claim}
\label{claim:key}
Let $W$ be the $p$-fold cyclic branched covering of $B^{4}$ branched along $\Sigma$ (pushed into the interiors of $B^{4}$). Then $X$ is diffeomorphic to $W$.
\end{claim}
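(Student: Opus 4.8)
The plan is to exhibit both $X$ and $W$ as the total space of one and the same Lefschetz fibration over $D^2$ with regular fiber $S$, so that the claimed diffeomorphism becomes essentially tautological. The organizing principle is that the branched covering construction is compatible with the obvious disk-bundle fibration of $B^4$: writing $B^4 = D^2 \times D^2$ and using the projection $p_1 : B^4 \to D^2$ onto the first factor, each fiber $D^2 \times \{t\}$ is a $2$-disk meeting the braided Bennequin surface $\Sigma$ in exactly $m$ points, namely the $m$ strands of the braid sitting over $t$. Taking the $p$-fold cyclic branched cover therefore turns each such fiber into the $p$-fold cyclic branched cover of $D^2$ along $m$ points, which is precisely the page $S$ of Figure \ref{fig:curves}. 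In other words, $p_1$ lifts to a map $W \to D^2$ whose regular fiber is $S$.

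First I would isolate the statement I actually need: for any $m$-braid $\beta$, the $p$-fold cyclic branched cover of $B^4$ along the Bennequin surface of the closure of $\beta$ (pushed into the interior) is the Lefschetz fibration over $D^2$ with regular fiber $S$ whose vanishing cycles are obtained by replacing each letter $\sigma_i^{\pm 1}$ of $\beta$ by the ordered tuple $C_{i,1},\ldots,C_{i,p-1}$, i.e. by the Dehn twists occurring in $\pi(\sigma_i)^{\pm 1}$. Granting this, I apply it twice. Applied to $\beta = \alpha$ in the form (\ref{eqn:factbraid2}) it identifies $W$ with the Lefschetz fibration whose total monodromy is $\pi(\alpha) = \phi$ and whose vanishing cycles are the curves $C_{i,j}$ coming from \emph{every} letter of $\alpha$. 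Applied to the prefix $\beta_0 = \sigma_{m-1}\cdots\sigma_1$, whose closure is the unknot and whose Bennequin surface is an unknotted disk, it identifies the $p$-fold branched cover of $(B^4,\text{disk})$ — which is again $B^4$ — with the sub-fibration carved out by the first $(m-1)$ letters, whose vanishing cycles assemble to $\psi$. Since $X$ is by construction obtained from this copy of $B^4$ by attaching the contact $2$-handles of the factorization (\ref{eqn:factphi}) of $\psi^{-1}\phi$, and since a Legendrian unknot with $tb=-1$, $rot=0$ and contact surgery coefficient $\mp 1$ is exactly the vanishing-cycle $2$-handle along $C_{i,j}$ with the page framing of a $(\pm)$-Dehn twist, the handles of $X$ beyond $B^4$ are precisely the remaining vanishing cycles of $W$. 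Thus $X$ and $W$ are the same Lefschetz fibration over $D^2$ with fiber $S$ and the same ordered system of vanishing cycles, hence diffeomorphic.

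The one step demanding genuine work, and which I regard as the main obstacle, is the local model underlying the statement recorded above: I must check that when the two branch points exchanged by a single crossing $\sigma_i^{\pm 1}$ collide, the $p$-fold cyclic branched cover of the moving disk degenerates through exactly $(p-1)$ Lefschetz nodes, with vanishing cycles $C_{i,1},\ldots,C_{i,p-1}$ appearing in the stated order and with sign dictated by $\e_j$. Concretely this amounts to analysing the branched cover of the elementary cobordism that swaps two adjacent points of $D^2 \setminus \{m \text{ points}\}$ and verifying that the resulting monodromy is $D_{i,1}\cdots D_{i,p-1} = \pi(\sigma_i)$, as in \cite[Lemma 3.1]{hkp}; the positivity of $\pi$ then forces positive (resp. negative) crossings to contribute positive (resp. negative) Dehn twists, matching the $(-1)$ (resp. $(+1)$) contact surgeries recorded after (\ref{eqn:factphi}). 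Once the local model is in hand, the global interlocking of these $2$-handles is forced by how the bands of $\Sigma$ are nested in Figure \ref{fig:curves}, and one verifies that it reproduces verbatim the linking pattern of Observation \ref{obs:cslink} (a Hopf link exactly when $i_k \in \{i_l,i_l+1\}$); this also furnishes an alternative, purely Kirby-calculus route to the same conclusion should one prefer to bypass the Lefschetz-fibration language.
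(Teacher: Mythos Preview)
Your argument is correct and takes a genuinely different route from the paper's. You organize everything around an achiral Lefschetz fibration: lifting the projection $B^{4}=D^{2}\times D^{2}\to D^{2}$ through the branched cover exhibits $W$ as a fibration with regular fiber $S$, and you then identify $X$ with the same fibration by recognising the sub-fibration over the prefix $\sigma_{m-1}\cdots\sigma_{1}$ as $B^{4}$ and the remaining vanishing-cycle $2$-handles as exactly the contact-surgery $2$-handles (the framing check $tb=-1\Rightarrow$ page framing $=$ contact framing being the key bookkeeping). The paper, by contrast, never mentions Lefschetz fibrations: it runs the Akbulut--Kirby algorithm on $\Sigma$ to produce an explicit Kirby diagram for $W$, after first isotoping $\Sigma$ into a planar ``convenient position'' by a sequence of disk-flips (Figures~\ref{fig:sd1}--\ref{fig:Seifert}), and then observes by direct comparison (matching Observation~\ref{obs:cslink} with Observation~\ref{obs:handlenest}) that this framed link is literally the same as the smooth link underlying the contact surgery diagram of $X$. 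Your approach is more conceptual and explains \emph{why} the two handlebodies coincide; the paper's is more hands-on and sidesteps the one genuine analytic point in yours, namely that for $p>2$ the local model $\tilde u^{p}+v^{2}$ over a crossing is not Lefschetz on the nose and must be perturbed into $p-1$ nodes of $A_{p-1}$ type---you flag this as the main obstacle but do not quite say that a deformation is needed. Interestingly, the ``purely Kirby-calculus route'' you mention in your last sentence as an alternative is precisely what the paper does.
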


\begin{proof}[Proof of Claim]
We draw a Kirby diagram of $W$, following \cite[Section 2]{ak} (see also \cite[Section 6.3]{gs}).

Take a handle decomposition of $\Sigma$ so that the $0$-handle is $D_1 \cup h_1 \cup D_2 \cup \cdots \cup h_{m-1} \cup D_{m}$, where $h_i$ is the twisted band coming from the $(m-i)$-th factor $\sigma_{i}$ of the factorization (\ref{eqn:factbraid2}), and that the 1-handles are the rest of twisted bands.
We put $\Sigma$ in the 3-space so that the $0$-handle is the unit disc in the $x$-$y$ plane, and that $1$-handles are contained in the upper half-space.
Then the Kirby diagram of $W$ is obtained by ``symmetrizing'' the cores of $1$-handles of $\Sigma$ whose framings are determined by the framings of the core of $1$-handles. Except the simplest case $p=2$, which we will explictly illustrate later in Example \ref{exam:double}, the diagram is complicated and it is not easy to draw the whole diagram -- however, the contribution of $1$-handle in the resulting Kirby diagram, and how they interact each other is simple. See Figure \ref{fig:branchcov} below.

\begin{figure}[htb]
\begin{center}
\includegraphics*[bb=91 540 337 729,width=70mm]{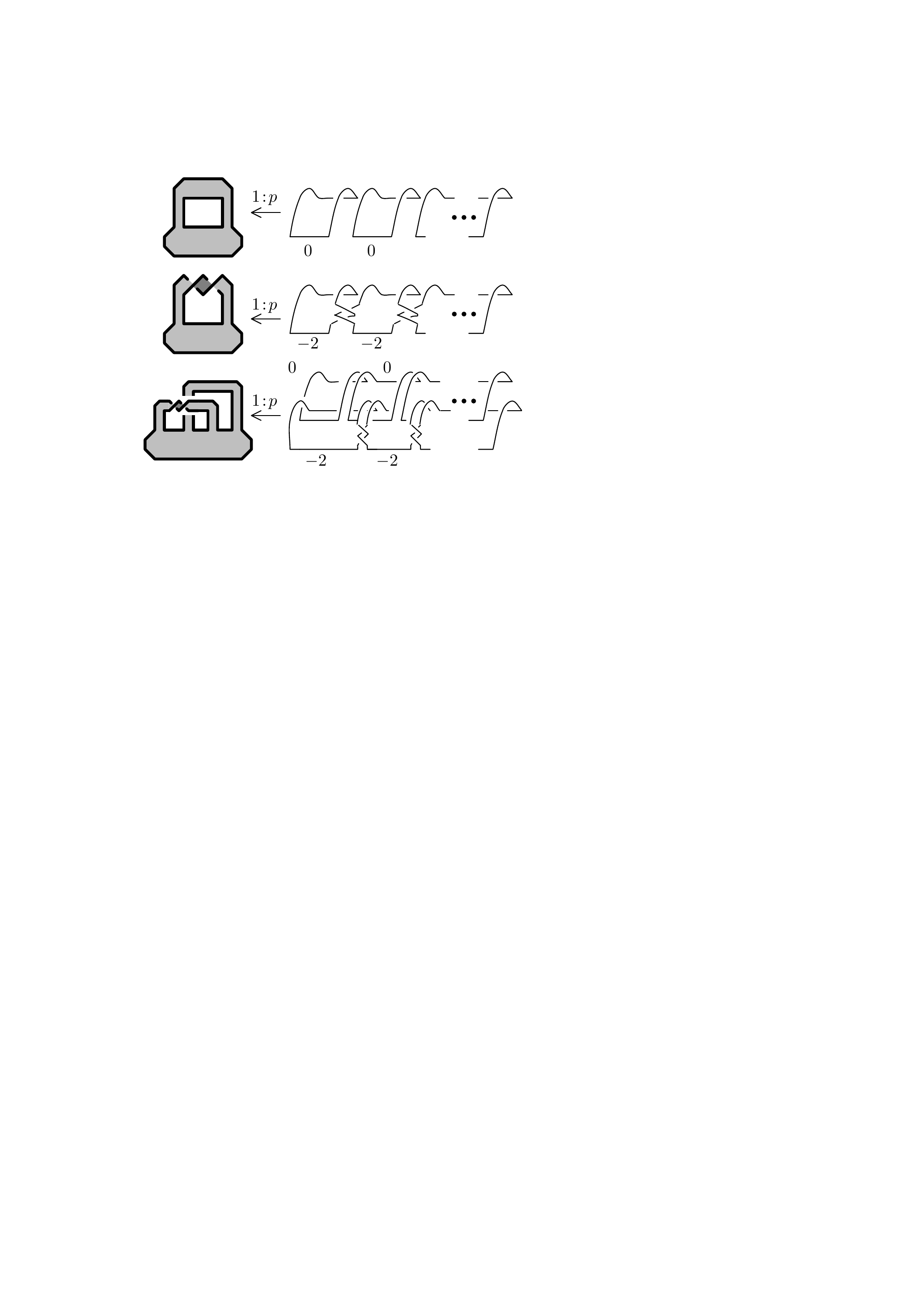}
\caption{Branched covering of Seifert surface: $1$-handle contribution, and how these contributions interact each other (when they are nested).}
\label{fig:branchcov}
\end{center}
\end{figure}
 
To put $\Sigma$ in such a convenient position, first we flip the $1$st disc $D_{1}$, by untwisting the band $h_{1}$ (see Figure \ref{fig:sd1}(a) --(d)). This simplifies the $0$-handle of $\Sigma$, and iterating this flipping procedure, eventually we put $\Sigma$ in such a convenient position (see Figure \ref{fig:Seifert}). In this process, all 1-handles gains negative half twist, so in the final position, the framing of $1$-handle is $(-1)$ if it comes from positive generator, and is $0$ if it comes from negative generator.

\begin{figure}[htb]
\begin{center}
\includegraphics*[bb=84 581 503 744,width=130mm]{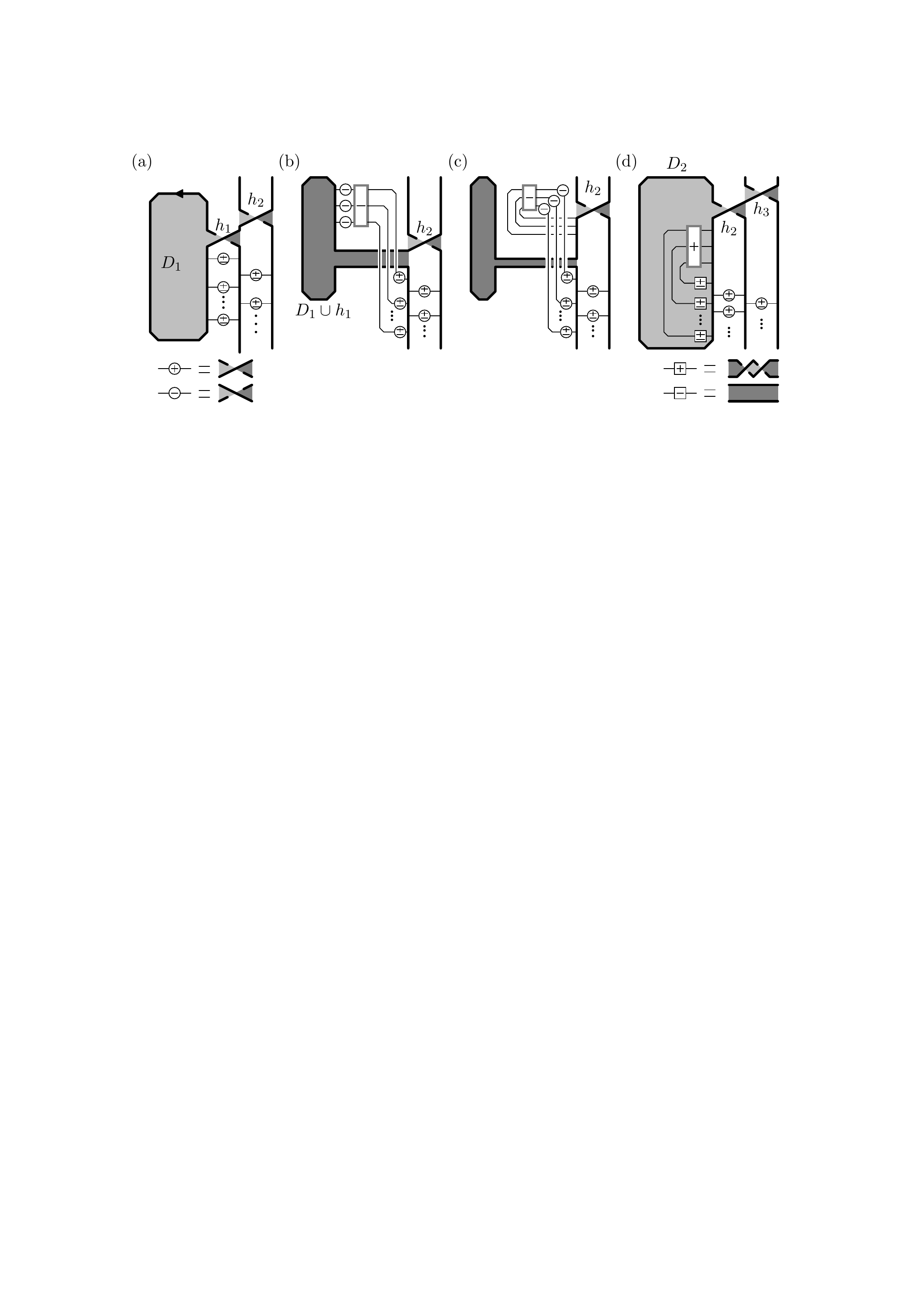}
\caption{Putting the Seifert surface $\Sigma$ into a nice position, by flipping $D_1$ along $h_{1}$. Here we draw 1-handle by a line with $\pm$-sign coming from coresponding generator $\sigma_{i}^{\pm}$. The gray box inside $\pm$ represents positive and negative half twist.}
\label{fig:sd1}
\end{center}
\end{figure}

\begin{figure}[htb]
\begin{center}
\includegraphics*[bb=108 659 292 729, width=75mm]{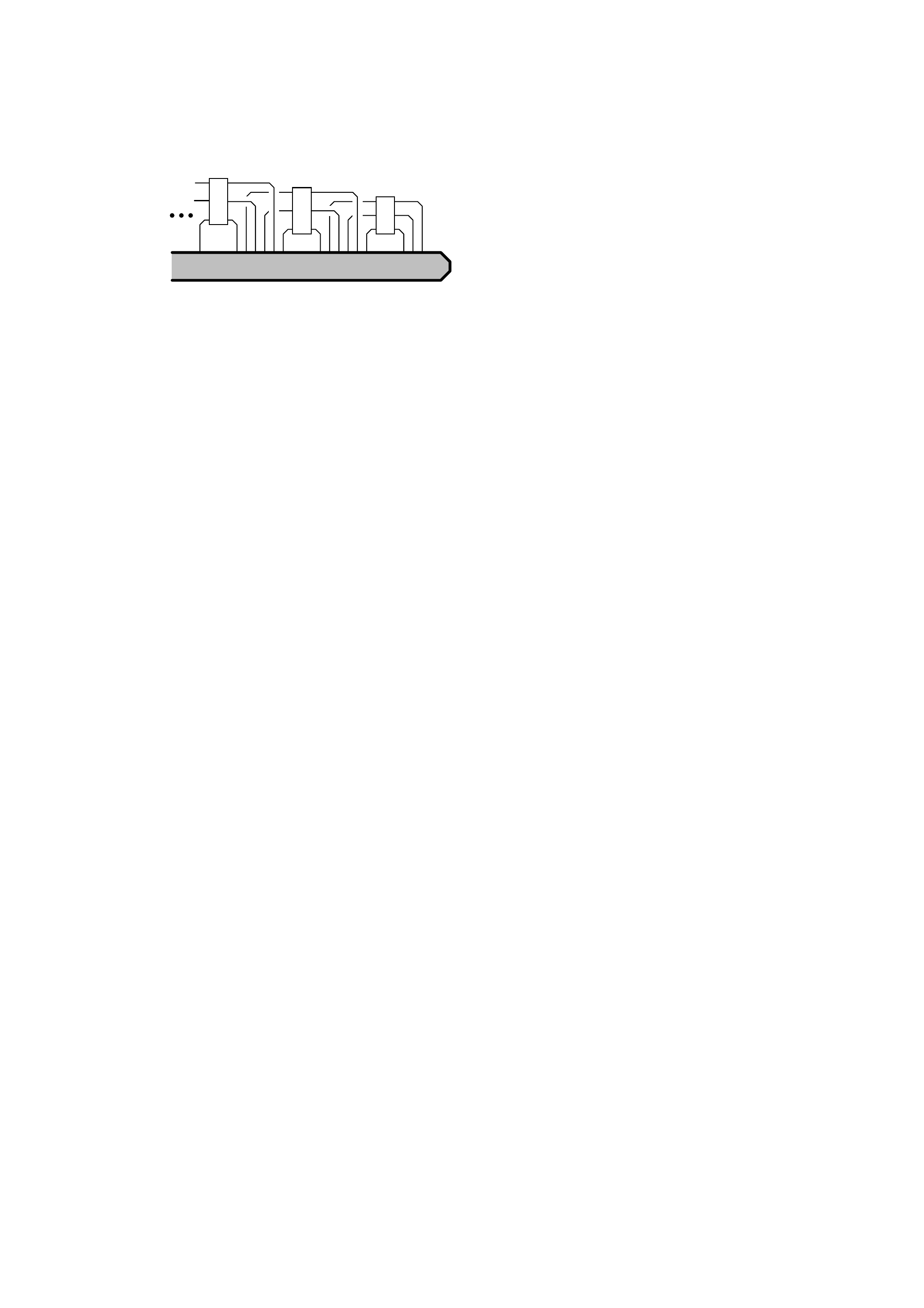}
\caption{The canonical Seifert surface $\Sigma$, put in a convenient position for drawing Kirby diagram. A box represents the positive half twist, and each 1-handle depicted by line has either $0$ or $-1$ framing.}
\label{fig:Seifert}
\end{center}
\end{figure}

From this procedure, we observe:

\begin{observation}
\label{obs:handlenest}
The $1$-handles $h_{k}$ and $h_{l}$ of $\Sigma$, coming from the $k$-th and $l$-th factor $\sigma_{i_k}^{\e_k}$ and $\sigma_{i_l}^{\e_l}$ in the factorization (\ref{eqn:factbraid}) ($k<l$), nest each other in Figure \ref{fig:Seifert} if and only if $i_{k} \in \{i_{l}, i_{l}+1\}$ (see Figure \ref{fig:handlenest})
\end{observation}

\begin{figure}[htb]
\begin{center}
\includegraphics*[bb= 86 658 303 727,width=70mm]{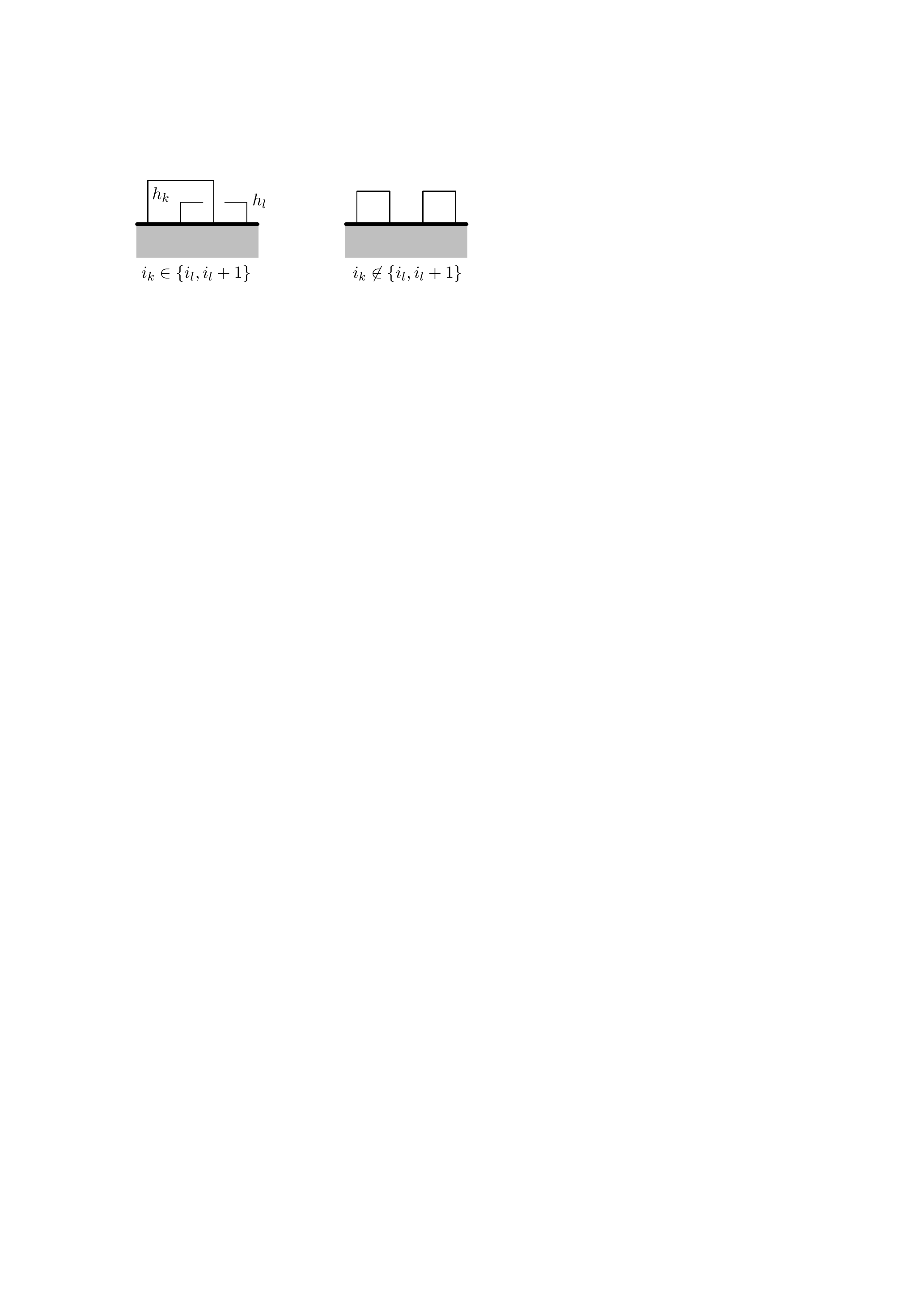}
\caption{How $1$-handles of $\Sigma$ nest each other, when we put $\Sigma$ in a convenient position as in Figure \ref{fig:Seifert}.}
\label{fig:handlenest}
\end{center}
\end{figure}

Recall that each component of the contact surgery diagram of $M$ has $tb=-1$, so $(-1)$ and $(+1)$ contact surgery corresponds to $-2$ and $0$ topological surgery,  respectively. Hence each factor $\sigma_{i}^{\pm 1}$ in (\ref{eqn:factbraid}) contributes the same $(p-1)$ components framed link in the surgery diagram of $X$ and $W$ (compare Figure \ref{fig:csd_local} and Figure \ref{fig:branchcov}). Moreover, from Observation \ref{obs:cslink} and Observation \ref{obs:handlenest}, these local contributions link other part of the diagrams, in exactly the same way (compare  Figure \ref{fig:csd_localnest} and Figure \ref{fig:branchcov}).
Thus, comparisons of the construction of the surgery diagrams for $X$ and $W$ proves that they are completely the same diagram.

\end{proof}

Claim \ref{claim:key}, together with a well-known fact on Tristram-Levine signature (see \cite[Theorem 12.6]{kau}, for example) shows
\begin{equation}
\label{eqn:conseq2}
\sigma(X) = \sigma(W)=\sum_{\omega:\omega^{p}=1}\sigma_{\omega}(K).
\end{equation}
The equalities (\ref{eqn:conseq1}) and $(\ref{eqn:conseq2})$ completes the proof.
\end{proof}

\begin{example}[The case double branched covering]
\label{exam:double}

In the case $p=2$, the contact double branched covering, it is much easier to treat and draw the surgery diagram of $X$ and $W$. Here we give more explicit illustrations of surgery diagrams.

Let $(M,\xi)$ be a contact double branched covering branched along the closure of an $m$-braid $\alpha$. 
We begin with the open book $(S,\psi)$ whose binding is $(m,2)$-torus link.
To visualize its symmetry, we view the the page $S$ as the $(m-1)$-times plumbing of an annulus $A_{i}$ that is the boundary of the positive Hopf link, as illustrated in Figure \ref{fig:plumb}. As an element of the mapping class group of $S$, the standard generator $\sigma_{i}$ lifts to the right-handed Dehn twist along the core of an annulus $A_{i}$.

\begin{figure}[htb]
\begin{center}
\includegraphics*[bb=83 619 275 724,width=55mm]{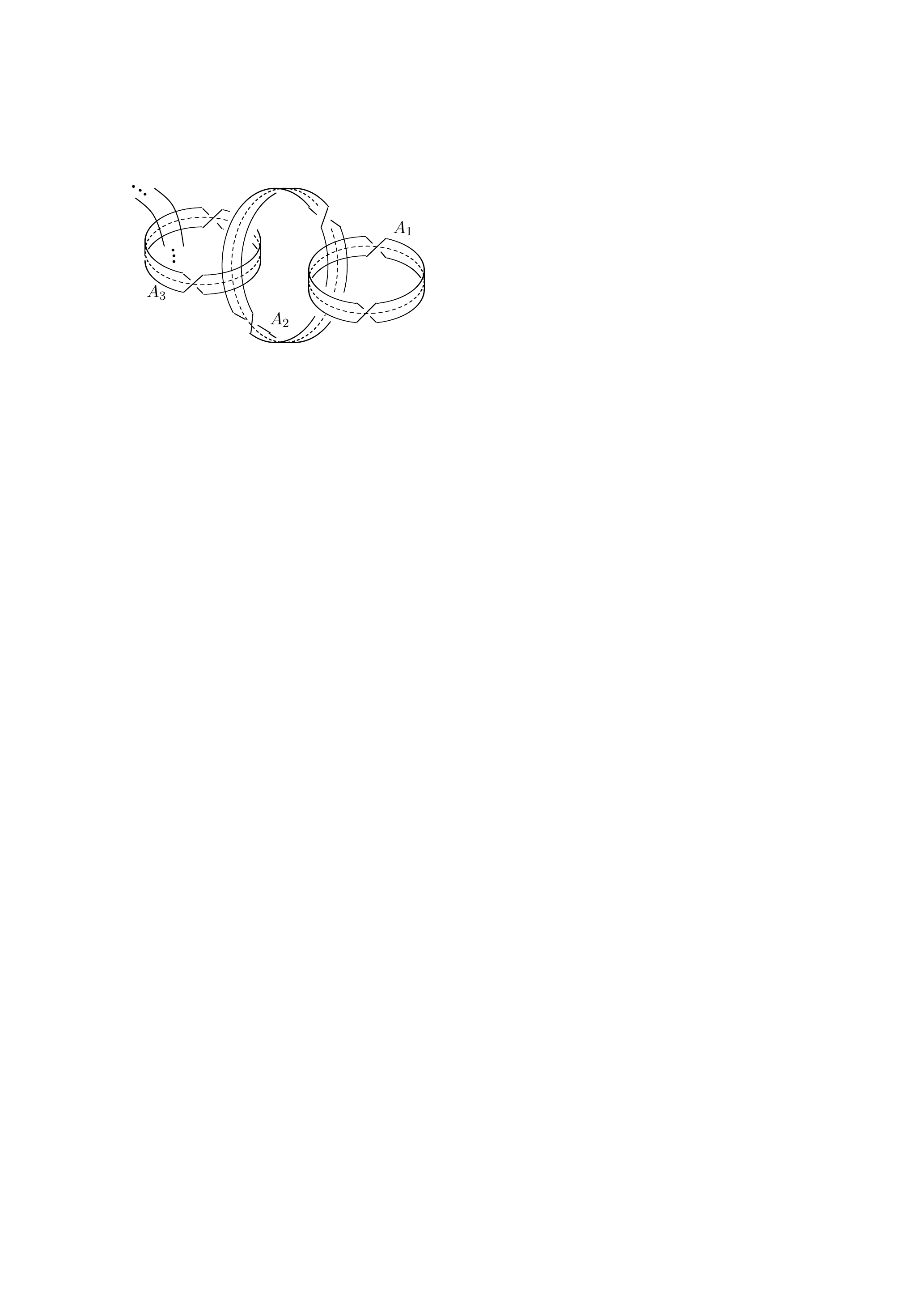}
\caption{Page $S$ of an open book of $(S^{3},\xi_{std}$ whose binding is the $(m,2)$-torus link.}
\label{fig:plumb}
\end{center}
\end{figure}

By taking a factorization of the braid $\alpha$, following the discussion in the proof of Theorem \ref{theorem:main}, we get a contact surgery diagram of $(M,\xi)$,  as we draw in Figure \ref{fig:csdiagram}. On the other hand, the Kirby diagram of $W$ is obtained by ``doubling'' the core of $1$-handles of the canonical Seifert surface $\Sigma$, as we show in Figure \ref{fig:Kirbydiagram}.

\begin{figure}[htbp]
\begin{center}
\includegraphics*[bb=83 633 361 730,width=80mm]{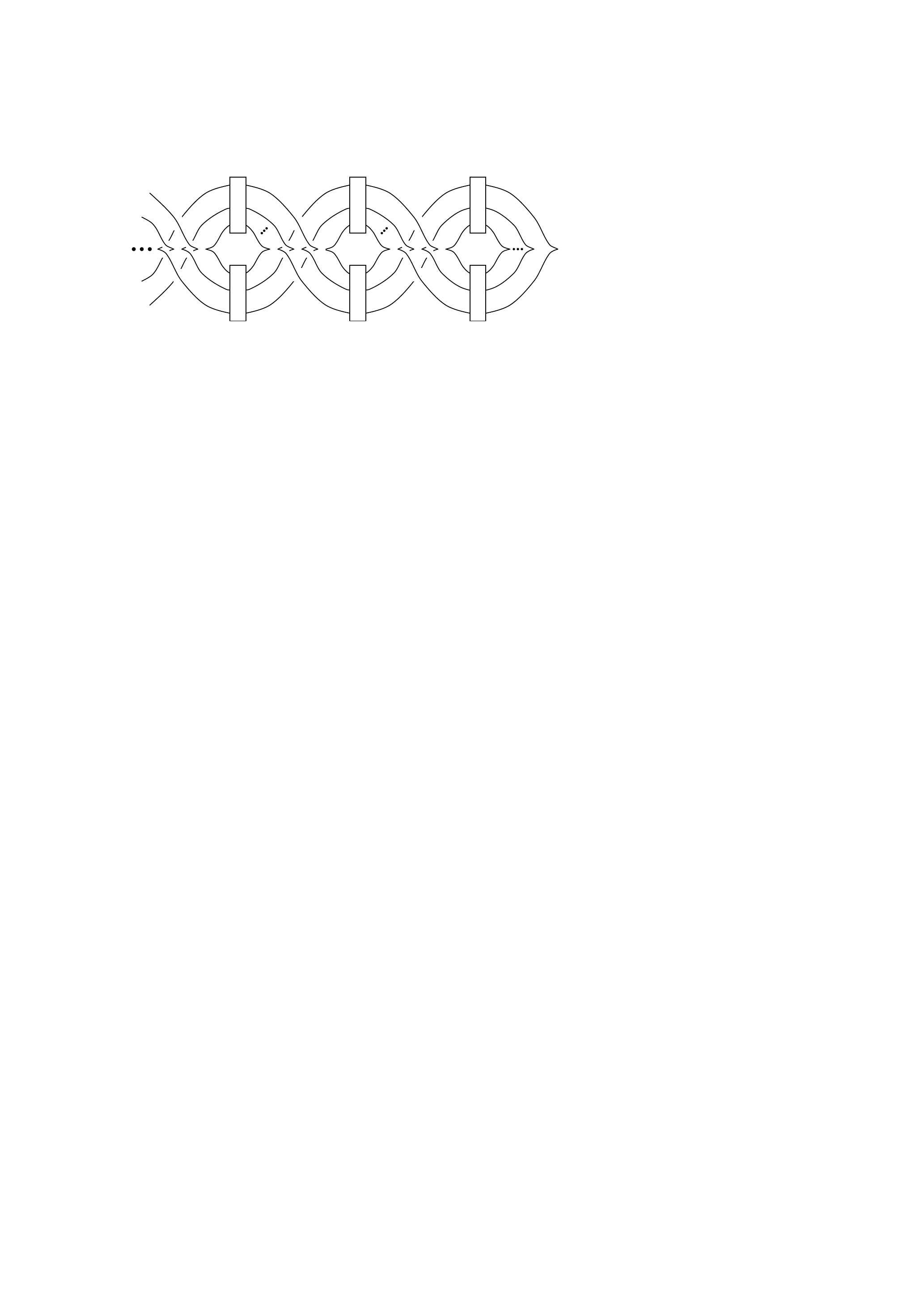}
\caption{A contact surgery diagram of contact double branched covering. A box represents the positive half twist.}
\label{fig:csdiagram}
\end{center}
\end{figure}

\begin{figure}[htb]
\begin{center}
\includegraphics*[bb=58 509 331 727,width=80mm]{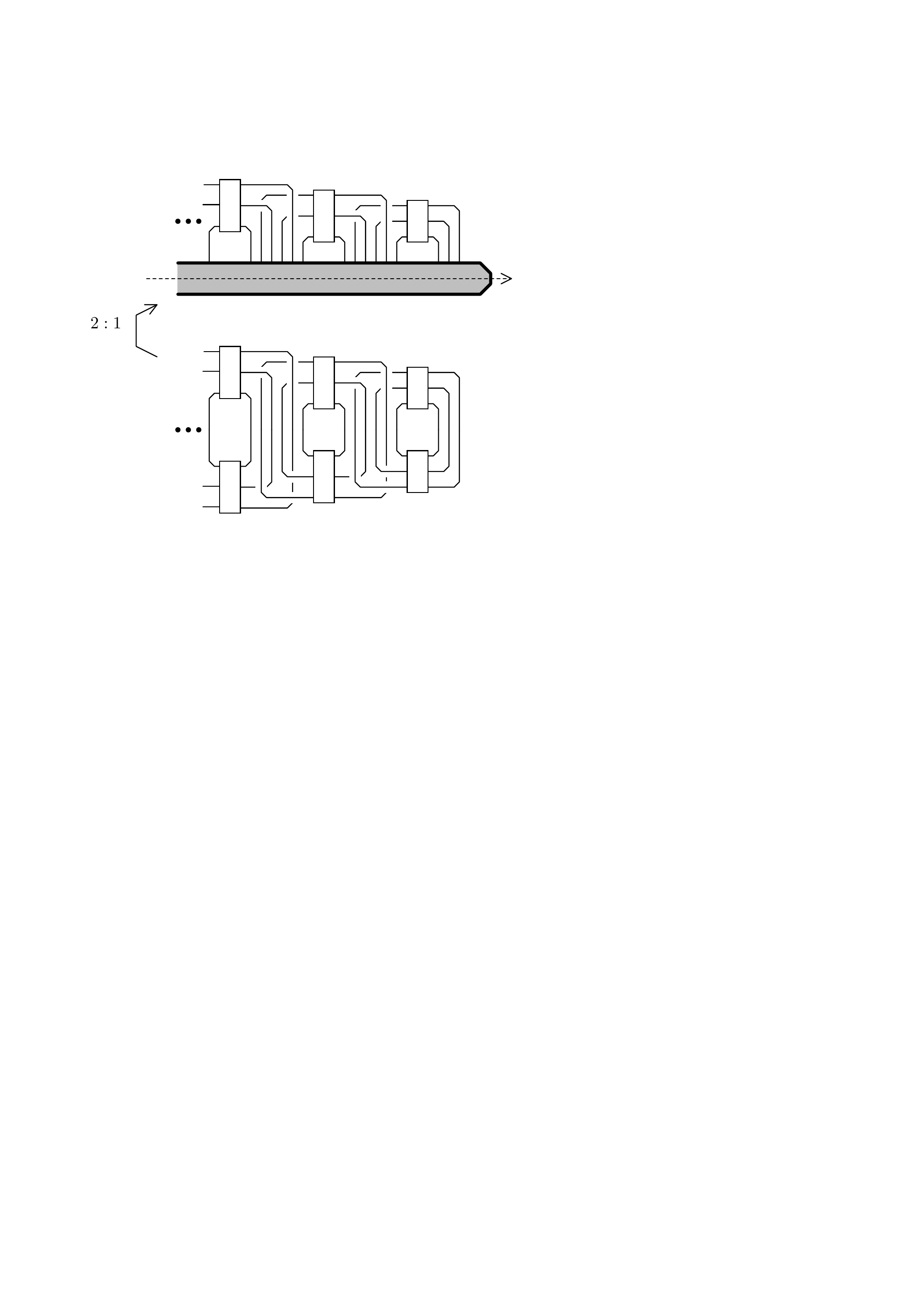}
\caption{Kirby Diagram for double branched covering along the canonical Seifert surface $\Sigma$.}
\label{fig:Kirbydiagram}
\end{center}
\end{figure}

Now one immediately see that these two diagrams are the same.

\end{example}

\end{document}